\newtheorem{thm}{Theorem}
\newtheorem{lem}[thm]{Lemma}
\newtheorem{prop}[thm]{Proposition}
\newtheorem{cor}[thm]{Corollary}
\newtheorem{remark}[thm]{Remark}
\newcommand \Q{\mathbb{Q}}
\newcommand \Z {\mathbb{Z}}
\newcommand \C {\mathbb{C}}
\newcommand \N {\mathbb{N}}
\begin{document}

\title{Irreducible characters with bounded root Artin conductor }
\author{{Amalia~Pizarro-Madariaga} \\
{Instituto de Matem\'aticas} \\
{Universidad de Valpara\'{\i}so, Chile}\\
{\tt amalia.pizarro@uv.cl}
}

\date{}

\pagenumbering{arabic}

 \maketitle

 \begin{abstract}
In this work, we prove that the best possible lower bound for the Artin conductor is exponential in the degree.

 \end{abstract}

\section{Introduction}

Let $K$ be an algebraic number field  such that $K/{\Q}$ is Galois and let $\chi$ be the character of a linear representation of Gal($K/{\Q}$). We denote by $f_{\chi}$ the Artin conductor of $\chi$.  In \cite{O2}, Odlyzko found lower bounds for  $f_\chi$ by applying analytic methods to the Artin $L$-function. We have improved Odlyzko's lower bounds in  \cite{A1} by using explicit formulas for Artin $L$-functions.  In particular, if $\chi$ is an irreducible character of Gal($K/{\Q}$) and by assuming that $\chi\overline{\chi}$ satisfies the Artin conjecture, we obtained

\begin{equation*}
f_{\chi}^{1/\chi(1)}\geq
4.73(1.648)^{\frac{(a_{\chi}-b_{\chi})^2}{\chi(1)^2}}e^{ -(13.34/\chi(1))^2},
\end{equation*}
where $a_{\chi}$ and $b_{\chi}$ are nonnegative integers giving the $\Gamma$-factors of the completed
Artin $L$-function. Namely, $a_{\chi}+b_{\chi}=\chi(1)$ and $a_{\chi}-b_{\chi}=\chi(\sigma)$, with $\sigma\in$ Gal($K/{\Q}$) the complex conjugation. This bound is even better when we assume that $L(s,\chi\overline{\chi})$ satisfies the Generalized Riemann Hypothesis.  We have to point out that, throughout this article, no additional hypothesis are needed.

A natural question now is how far from being optimal these bounds are. This problem has been studied for the discriminant of a number field. If $n_0=r_1+2r_2$, let $d_n$ be the minimal discriminant of the field $K$ with degree $n$ such that $n$ is a multiple of $n_0$ and $r_1(K)$ and $r_2(K)$ are in the same ratio as $r_1,r_2$.
Let $\displaystyle\alpha(r_1,r_2)=\textrm{lim inf}_{n\to\infty}d_n^{1/n}$. Martinet considered number fields with infinite 2-class field towers and  proved that \cite {Ma1}
\begin{equation*}
\alpha(0,1) < 93\,\,\textrm{and}\,\,\alpha(1,0) < 1059.
\end{equation*}

\noindent In this work, we follow this idea and consider a number field $K$ with infinite $p$-class field tower for some prime $p$. Under some technical conditions on $K$, we find an upper bound (depending only on $K$) for the root Artin conductor of the irreducible characters of Gal($K_n/{\Q}$) (given by $f_{\chi}^{1/\chi(1)}$), where $K_n$ is the Hilbert $p$-class field of $K_{n-1}$ with $K_0=K$.

This work is organized as follow. In \S2, we propose a technique obtained from the Clifford's theory which is useful to classify the irreducible characters of Gal($K_n/{\Q}$) in terms of a certain normal subgroup. This characterization is convenient in order to obtain upper bounds for root Artin conductors. In \S3, we conclude that there exists an infinite sequence $\{\chi_n\}$ of irreducible Artin characters with $\chi_n(1)\to\infty$ and such that $f_{\chi_n}^{1/\chi_n(1)}\leq C$, where  $C>0$ is an effective computable constant. In \S4, we apply the results obtained in \S2 and \S3 to the number field $K=\Q(\zeta_{11}+\zeta_{11}^{-1},\sqrt{2},\sqrt{-23})$. This field was found by Martinet in \cite {Ma1} and  has infinite 2-class field tower and lowest known discriminant. In particular, we prove that
for each $n\geq 1$ it is possible to find an irreducible character of Gal($K_n/{\Q}$) with large degree and

$$f_{\chi}^{1/\chi(1)}\leq C,\,\,\textrm{where}\,\,C \leq 11^4\cdot 2^{15}\cdot 23.$$

\section{Irreducible characters of large degree}

In this section, we develop a technique to classify the irreducible characters of groups with a normal subgroup of prime index. Also, by using a result from \cite{I1},  we obtain conditions that ensure the existence of irreducible characters of large degree. We believe these results are of independent interest.

Let us consider a finite group $G$ and a normal subgroup $H$ of $G$. We denote the set of irreducible characters of $G$ by $Irr(G)$. If $\chi$ and $\theta$ are characters of $G$ and $H$ respectively, we denote the restriction of $\chi$  to $H$ by $Res_H^G \chi$ and the induced character of $\theta$ to $G$ by $Ind_H^G\theta$.
If $\theta\in Irr(H)$,  we define the conjugate character to $\theta$ in $G$ by $\theta^{g}:H\rightarrow{\C}$, where $\theta^{g}(h)=\theta(ghg^{-1})$. The inertia group of $\theta$ in $G$  is given by
\begin{equation*}
I_G(\theta)=\{g\in G:\theta^{g}=\theta\}.
\end{equation*}

\noindent G acts on $Irr(H)$ by conjugation and $I_G(\theta)$ is the stabilizer of $\theta$ under this action.
The next result of Clifford will be the main argument allowing us to give a classification of the irreducible characters of $G$.

\begin{thm}\label{clif} \textup {(Clifford, \cite[p.~253]{Hup})}
 Let $H$ be a normal subgroup of $G$ and $\theta\in Irr(H),\chi\in Irr(G)$ such that $\theta$ is an irreducible constituent of $Res_{H}^G \chi$, with $\langle Res_{H}^G\chi,\theta\rangle=e>0$. Suppose that $\theta=\theta^{g_1},\theta^{g_2},\ldots,\theta^{g_t}$ are the distinct conjugates of $\theta$ in $G$. Assume also that $$G=\bigcup_{j=1}^tI_G(\theta)g_j,\,\,\textrm{with}\,\,t=[G:I_G(\theta)].$$
Then,
\begin{itemize}
\item[(a)] $\displaystyle Res_{H}^G Ind_H^G \theta=|I_G(\theta)/H|\sum_{j=1}^t\theta^{g_j}.$
\item [(b)] $\displaystyle \langle Ind_H^G\theta,Ind_H^G\theta\rangle=|I_G(\theta)/H|.$ In particular, $Ind_H^G\theta\in Irr(G)$ if and only if $I_G(\theta)=H$.
 \item [(c)]$\displaystyle Res_H^G\chi=e\sum_{j=1}^t \theta^{g_j}$. In particular,
 $$\chi(1)=e t\theta(1)\,\,\,\,\,\textrm{and}\,\,\,\,\,\langle Res_H^G\chi,Res_H^G\chi\rangle=e^2 t.$$
Also, $\displaystyle e^2\leq |I_G(\theta)/H|$ and $e^2t\leq |G/H|.$
\end{itemize}
\end{thm}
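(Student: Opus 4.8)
The plan is to prove the statement of Theorem~\ref{clif} (Clifford's theorem) by the standard route, building up parts (a), (b), (c) in that order and deducing the inequalities at the end from Frobenius reciprocity and the orthogonality relations. Throughout I would write $T = I_G(\theta)$ for brevity, note that $H \trianglelefteq T \le G$ with $[G:T] = t$, and use that the coset representatives $g_1,\dots,g_t$ of $T$ in $G$ may be taken so that $\theta^{g_1},\dots,\theta^{g_t}$ are exactly the distinct $G$-conjugates of $\theta$ (two conjugates $\theta^{g}$ and $\theta^{g'}$ coincide iff $g'g^{-1} \in T$, precisely because $T$ is the stabilizer of $\theta$ under the conjugation action of $G$ on $\mathrm{Irr}(H)$).

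For part (a), I would compute $\mathrm{Res}^G_H \mathrm{Ind}^G_H \theta$ directly from the double-coset (Mackey) description: since $H$ is normal, every $(H,H)$-double coset is a single coset $Hg = gH$, so Mackey's formula collapses and gives $\mathrm{Res}^G_H \mathrm{Ind}^G_H \theta = \sum_{g \in H\backslash G} \theta^{g}$. Grouping the sum over $G$ into the $t$ cosets of $T$ and observing that each $g \in T$ fixes $\theta$ contributes one copy of $\theta^{g_j}$ for each of the $|T/H|$ cosets of $H$ in $T$, this becomes $|T/H| \sum_{j=1}^{t} \theta^{g_j}$, which is (a). For part (b), I would apply Frobenius reciprocity twice (or once, plus the fact that induction is adjoint to restriction):
\begin{equation*}
\langle \mathrm{Ind}^G_H \theta, \mathrm{Ind}^G_H \theta \rangle_G = \langle \theta, \mathrm{Res}^G_H \mathrm{Ind}^G_H \theta \rangle_H = \Big\langle \theta, |T/H| \sum_{j=1}^{t} \theta^{g_j} \Big\rangle_H = |T/H|,
\end{equation*}
using that the $\theta^{g_j}$ are distinct irreducible characters of $H$ with $\theta = \theta^{g_1}$, so exactly one inner product in the sum is $1$ and the rest vanish. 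The ``in particular'' is immediate: $\mathrm{Ind}^G_H \theta$ is irreducible iff this self-inner-product equals $1$ iff $|T/H| = 1$ iff $T = H$.

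For part (c), the key input is that the conjugation action of $G$ permutes the irreducible constituents of $\mathrm{Res}^G_H \chi$ transitively when $\chi$ is irreducible: if $\theta$ appears in $\mathrm{Res}^G_H \chi$ with multiplicity $e$, then for every $g \in G$ the conjugate $\theta^{g}$ also appears with multiplicity $e$ (because $\chi^{g} = \chi$ as $H \trianglelefteq G$ and $\chi$ is a character of $G$), and transitivity follows by applying Frobenius reciprocity to $\langle \chi, \mathrm{Ind}^G_H \theta \rangle$ together with (a)---any constituent of $\mathrm{Res}^G_H \chi$ must be a $G$-conjugate of $\theta$. Hence $\mathrm{Res}^G_H \chi = e \sum_{j=1}^{t} \theta^{g_j}$. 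Evaluating at $1$ gives $\chi(1) = e\, t\, \theta(1)$ since all conjugates have the same degree, and $\langle \mathrm{Res}^G_H \chi, \mathrm{Res}^G_H \chi \rangle_H = e^2 t$ because the $\theta^{g_j}$ are pairwise orthogonal. Finally, for the two inequalities: $1 = \langle \chi, \chi \rangle_G \ge \langle \chi, \mathrm{Ind}^G_H \theta \rangle_G \cdot \langle \mathrm{Ind}^G_H \theta, \chi\rangle_G / \langle \mathrm{Ind}^G_H\theta, \mathrm{Ind}^G_H\theta\rangle$; more cleanly, write $\mathrm{Ind}^G_H \theta = \sum_i m_i \psi_i$ with $\psi_i \in \mathrm{Irr}(G)$, so $\langle \mathrm{Ind}^G_H\theta, \mathrm{Ind}^G_H\theta\rangle = \sum_i m_i^2 = |T/H|$, while Frobenius reciprocity gives $m_\chi = \langle \mathrm{Ind}^G_H\theta, \chi\rangle = \langle \theta, \mathrm{Res}^G_H\chi\rangle = e$, whence $e^2 \le \sum_i m_i^2 = |T/H|$; multiplying by $t = [G:T]$ and using $|T/H|\cdot[G:T] = |G/H|$ yields $e^2 t \le |G/H|$. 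The main obstacle---really the only nontrivial point---is establishing the transitivity of the $G$-action on the constituents of $\mathrm{Res}^G_H\chi$ in part (c); everything else is bookkeeping with Mackey's formula and Frobenius reciprocity, so I would make sure to state the conjugation-invariance of multiplicities carefully and deduce transitivity from it before reaping parts (c) and the inequalities.
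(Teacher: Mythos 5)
Your proof is correct and follows the standard route; the paper itself does not prove this theorem but simply refers to Huppert (p.~253), whose argument proceeds along essentially the same lines (direct computation of $Res_H^G Ind_H^G\theta$ using normality of $H$ for (a), Frobenius reciprocity for (b), and conjugation-invariance of multiplicities plus transitivity for (c) and the final inequalities). So there is nothing to compare against in the paper, and your write-up would serve as a complete self-contained proof.
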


In order to ensure the existence of a sequence of irreducible characters of growing degrees, we prove the following corollary which is a consequence of Clifford's Theorem which is given as an exercise in \cite[p.~98]{I1}.

\begin{cor}\label{grandecito}

Let $G$ be a group with a chain of normal subgroups
$$1=H_0\trianglelefteq H_1\trianglelefteq H_2\ldots\trianglelefteq H_n=G$$
such that $H_i/H_{i-1}$ is non abelian for $i=1,\ldots,n$. Then, there exists an irreducible character $\phi$ of $G$, such that $\phi(1)\geq 2^n$.
\end{cor}

\begin{proof}

Let us prove it by induction on $n$. For $n=1$, note that since $H_1\cong H_1/H_0$ is non abelian there exists a character $\psi_1\in Irr(H_1)$ with $\psi_1(1)\geq 2$.

Let us see the case  $n=2$, which illustrates how to proceed in the general case. Let $\psi_2\in Irr(H_2)$ such that  $\psi_1$ is an irreducible constituent of $Res^{H_2}_{H_1}\psi_2$ (take for example $\psi_2\in Ind^{H_2}_{H_1}\psi_1$). From  Theorem \ref{clif}, it follows that

\begin{equation}\label{clic}
Res^{H_2}_{H_1}\psi_2=e\sum_{i=1}^t\psi_{1,i},
\end{equation}
where $\psi_{1,1}=\psi_1,\ldots,\psi_{1,t}$ are the distinct conjugates of $\psi_1$ in $H_2$, $e=\langle Res^{H_2}_{H_1}\psi_2,\psi_1\rangle$ and $t=[H_2:I_{H_2}(\psi_1)].$

If $t=1$, then $H_2=I_{H_2}(\psi_1)$ and $Res^{H_2}_{H_1}\psi_2=e\psi_1$. Hence,
    \begin{itemize}
\item[(a)] $\psi_2(1)=e\psi_1(1)\geq 2$.
\item[(b)] For all $\beta\in Irr(H_2/H_1)$, the character $\psi_2 \beta$ belongs to $Irr(H_2)$ (see \cite[Theorem~19.5]{Hup}). As $H_2/H_1$ is non abelian, we choose $\beta_2\in Irr(H_2/H_1)$ such that $\beta_2(1)\geq 2$. Then, $\phi=\psi_2\beta_2  \in Irr(H_2)$ and $\phi(1)=\psi_2(1)\beta_2(1)\geq 2^2.$
\end{itemize}

 If $t\geq 2$, we just take $\phi=\psi_2$ so, $\phi(1)=et \psi_1(1)\geq 2\psi_1(1)\geq 2^2$.

Now, suppose that it is true for every $m<n$. Then there is $\psi_{m}\in Irr(H_{m})$ such that  $\psi_{m}(1)\geq 2^{m}$. Let us choose $\psi_{m+1}\in Irr(H_{m+1})$ such that $ \psi_{m}$ is an irreducible constituent of  $Res^{H_{m+1}}_{H_{m}}\psi_{m+1}$. As in \eqref{clic},
$$Res^{H_{m+1}}_{H_{m}}\psi_{m+1}=e\sum_{i=1}^t\psi_{m,i},$$
where $\psi_{m,1}=\psi_m,\ldots,\psi_{m,t}$ are the conjugates of $\psi_m$ in $H_{m+1},$ $e=\langle Res^{H_m}_{H_{m+1}}\psi_{m+1},\psi_m\rangle$ and $t=[H_{m+1}:I_{H_{m+1}}(\psi_m)].$

If $t=1$, then

\begin{itemize}
\item[(a)] $\psi_{m+1}(1)\geq 2^{m}$;
\item[(b)] $\psi_{m+1}\beta \in Irr(H_{m+1})$, with $\beta\in Irr(H_{m+1}/H_m)$. Let us choose $\beta$ such that $\beta(1)\geq 2$, then $\phi=\beta\psi_{m+1}\in Irr(H_{m+1})$ and $\phi(1)=\beta(1) \psi_{m+1}(1) \geq 2^{m+1}$.
\end{itemize}
If $t\geq 2$, we take $\phi=\psi_{m+1}$  and $\phi(1)\geq et\psi_m(1)\geq 2^{m+1}$.

\end{proof}

Now we state  the following result which is  crucial for  the proof of Theorem \ref{refe}.

\begin{prop}\label{new}
Let $H$ be a normal subgroup of a finite group $G$. Let $\theta\in Irr(H)$. Then there exists $\rho\in Irr(G)$ such that:
\begin{itemize}
\item [(i)]$\rho(1)\geq [G:H]^{-1/2}\theta(1),$
\item [(ii)] $\langle Ind_H^G \theta, \rho\rangle=a\geq 1$.
\end{itemize}

\end{prop}

\begin{proof}
By the Frobeius reciprocity formula, we get
$$\langle Ind_H^G\theta,Ind_H^G \theta \rangle=\langle \theta,Res_H^G(Ind_H^G\theta)\rangle.$$

Let us recall that
$$ Res_H^G(Ind_H^G \theta) (h)=\sum_{s\in S}\theta_s(h),$$
where $S$ is a system of representative classes of $G/H$ and the character $\theta_s(h)=shs^{-1}$, with $h\in H$.  Because $\theta\in Irr(H)$, the $\theta_s's$ are also irreducible. In particular, it is possible to prove that
$$\langle \theta,Res_H^G(Ind_H^G \theta)\rangle _H\leq \# S=[G:H].$$

\noindent If we write
$$Ind_H^G \theta=\sum_{\rho\in T}a_{\rho}\rho,$$
where $T=\{ \rho\in Irr(G):\langle Ind_H^G \theta, \rho\rangle = a_{\rho}\geq 1\}$ it verify that
$$\langle Ind_H^G \theta,Ind_H^G\theta \rangle=\sum_{\rho\in T}a_{\rho}^2.$$
Thus, we get $\displaystyle\sum_{\rho\in T}a_{\rho}^2\leq [G:H],$ which implies
\begin{itemize}
\item[(a)] $a_{\rho}\leq \sqrt{[G:H]}$ for each $\rho\in T$,
\item[(b)] $\# T\leq [G:H].$
\end{itemize}
Observe that
$$[G:H]\theta(1)=Ind_H^G\theta(1)=\sum_{\rho\in T}a_{\rho}\rho(1).$$
If we take $\rho_0\in T$ of maximal degree, we get
$$[G:H]\theta(1)\leq [G:H]^{3/2}\rho_0(1),$$
so
$$\rho_0(1)\geq [G:H]^{-1/2}\theta(1).$$
\end{proof}

 We say that an irreducible character $\theta$ of $H$ \textit{is extendible to} $G$ if there is an irreducible character $\chi$ of $G$ such that $Res_{H}^{G}\chi=\theta$. The following result gives us a criterion to decide when a character is extendible.

\begin{thm}\label{gaa} \textup{ (Gallagher, \cite[p.~225]{Gal})}
Let $G$ be a finite group with  a normal subgroup $H$  of prime index $q$ in $G$. If $\theta\in Irr(H)$ is invariant in $G$ (i.e. $I_G(\theta)=G$), then $\theta$ is extendible to $G$.
\end{thm}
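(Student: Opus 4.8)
The plan is to produce the extending character directly from a matrix representation, using crucially that $G/H$ is cyclic of prime order $q$. Fix a representation $\rho\colon H\to GL_n(\C)$ affording $\theta$, with $n=\theta(1)$, and choose $t\in G\setminus H$, so that $G=\bigsqcup_{i=0}^{q-1}Ht^i$ and $h_0:=t^q\in H$. Since $\theta$ is invariant in $G$, the conjugate representation $h\mapsto\rho(tht^{-1})$ of $H$ has character $\theta^t=\theta$, hence is equivalent to $\rho$; so there is an invertible matrix $M$ with $M\rho(h)M^{-1}=\rho(tht^{-1})$ for every $h\in H$. The aim is to take $M$ as the image of $t$ under an extension of $\rho$, after rescaling it so that the relation $t^q=h_0$ is respected.

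First I would identify $M^q$. Iterating the intertwining identity gives $M^q\rho(h)M^{-q}=\rho(t^q h t^{-q})=\rho(h_0)\rho(h)\rho(h_0)^{-1}$, so $\rho(h_0)^{-1}M^q$ commutes with the whole image of $\rho$; since $\rho$ is irreducible, Schur's lemma forces $\rho(h_0)^{-1}M^q=\lambda I$ for some $\lambda\in\C^\times$, i.e.\ $M^q=\lambda\,\rho(h_0)$. Choosing $\mu\in\C^\times$ with $\mu^q=\lambda$ (possible as $\C$ is algebraically closed) and replacing $M$ by $\mu^{-1}M$, I may assume $M^q=\rho(h_0)$ while still $M\rho(h)M^{-1}=\rho(tht^{-1})$. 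Then define $\tilde\rho(ht^i)=\rho(h)M^i$ for $h\in H$, $0\le i\le q-1$. The two relations $M\rho(h)=\rho(tht^{-1})M$ and $M^q=\rho(h_0)$ are exactly what is needed for $\tilde\rho$ to be a well-defined homomorphism $G\to GL_n(\C)$ extending $\rho$; its character $\chi$ satisfies $Res_H^G\chi=\theta$, and a character whose restriction to a subgroup is irreducible is itself irreducible, so $\chi\in Irr(G)$ and $\theta$ is extendible to $G$.

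The one point needing care is the verification that $\tilde\rho$ is genuinely a homomorphism: one must check that after reducing a product $(ht^i)(h't^j)$ to normal form $h''t^k$ using only the conjugation relation and $t^q=h_0$, the map $\tilde\rho$ sends both sides to the same matrix --- equivalently, that these relations present $G$ over $H$. This is mechanical, but it is where cyclicity of $G/H$ (a fortiori prime index) enters, since it is what makes a single extra generator and a single power relation suffice. The same content can be packaged cohomologically: invariance yields a projective representation of $G$ extending $\rho$ whose factor set is a $2$-cocycle on $G/H$, and the rescaling above trivializes it precisely because $H^2(\Z/q\Z,\C^\times)=0$, i.e.\ cyclic groups have trivial Schur multiplier. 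I do not expect any obstacle beyond this bookkeeping; in particular the existence of the $q$-th root $\mu$ uses nothing more than that $\C$ is algebraically closed, and I note that naive degree counting inside $Ind_H^G\theta$ (of norm $q$ by Theorem~\ref{clif}) does not by itself force an extension, which is why the explicit construction is needed.
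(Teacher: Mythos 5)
The paper offers no proof of Theorem \ref{gaa} at all --- it is quoted from Gallagher's article with a page reference --- so there is no in-paper argument to compare against; your proposal supplies a correct, self-contained proof of the cited result, and it is the standard one. The skeleton is sound at every step: Schur's lemma applied to $\rho(h_0)^{-1}M^q$ (which commutes with the irreducible $\rho(H)$ because $M^k\rho(h)M^{-k}=\rho(t^kht^{-k})$ by iteration) gives $M^q=\lambda\rho(h_0)$, and rescaling by a $q$-th root of $\lambda$ is unobstructed over $\C$. The verification you defer is genuinely routine: writing $(ht^i)(h't^j)=h\,(t^ih't^{-i})\,t^{i+j}$, and $t^{i+j}=h_0t^{i+j-q}$ when $i+j\geq q$, both sides of the homomorphism identity evaluate to $\rho(h)\rho(t^ih't^{-i})M^{i+j}$ using only the two relations you isolated, so $\tilde\rho$ is well defined and multiplicative; irreducibility is immediate since $Res_H^G\tilde\rho$ is already irreducible. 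Your closing remarks are also accurate: primality of $q$ is used only to force $G/H$ cyclic, the argument works verbatim for any cyclic quotient, and the cohomological reformulation via $H^2(G/H,\C^\times)=0$ is the usual packaging. No gaps.
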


\begin{lem}\label{con}
 Suppose that $G$ is a finite group with a normal subgroup $H$ such that  $[G:H]=q$, where $q$ is a prime number. If $\theta\in\,Irr(H)$, then the inertia group of $\theta$ is either,

\begin{itemize}

\item[(i)] $I_{G}(\theta)=G$, or

\item[(ii)]  $I_{G}(\theta)=H$.

\end{itemize}
\end{lem}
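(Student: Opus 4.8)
The plan is to exploit the fact that $q=[G:H]$ is prime, which forces the quotient $G/H$ to be essentially structureless, together with the elementary observation that the inertia group always sits between $H$ and $G$. Concretely, the first step is to check that $H\subseteq I_G(\theta)\subseteq G$. The inclusion $I_G(\theta)\subseteq G$ is immediate from the definition. For $H\subseteq I_G(\theta)$, I would note that for $h\in H$ the conjugate character $\theta^{h}$ is given by $\theta^{h}(x)=\theta(hxh^{-1})$ for $x\in H$, and since $\theta$ is a character of $H$ it is a class function on $H$; hence $\theta(hxh^{-1})=\theta(x)$ for all $x\in H$, i.e. $\theta^{h}=\theta$ and $h\in I_G(\theta)$. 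One also checks, as is standard for the stabilizer of a group action, that $I_G(\theta)$ is a subgroup of $G$ (closed under products and inverses because $(\theta^{g})^{g'}=\theta^{g'g}$).

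The second step is to apply the correspondence theorem for subgroups: since $H$ is normal in $G$, the subgroups of $G$ containing $H$ are in inclusion-preserving bijection with the subgroups of $G/H$, via $L\mapsto L/H$. Because $|G/H|=q$ is prime, Lagrange's theorem tells us $G/H$ has exactly two subgroups, namely $\{H/H\}$ and $G/H$ itself. Applying this to $L=I_G(\theta)$, which we have just shown is a subgroup of $G$ containing $H$, gives either $I_G(\theta)/H=\{H/H\}$, i.e. $I_G(\theta)=H$, or $I_G(\theta)/H=G/H$, i.e. $I_G(\theta)=G$. These are exactly the two alternatives (i) and (ii).

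An equivalent and perhaps even shorter way to phrase the conclusion, which I would mention, is that $t:=[G:I_G(\theta)]$ divides $[G:H]=q$ by the tower law, so $t\in\{1,q\}$; if $t=1$ then $I_G(\theta)=G$, and if $t=q$ then $[G:I_G(\theta)]=[G:H]$ together with $H\subseteq I_G(\theta)$ forces $I_G(\theta)=H$.

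I do not anticipate a genuine obstacle here: the statement is a direct consequence of the primality of the index. The only points requiring care are the verification that $\theta$ is fixed by all of $H$ (using that $\theta$ is a class function on $H$, not merely on $G$) and the invocation of the correspondence theorem to ensure no intermediate subgroups are missed; both are routine.
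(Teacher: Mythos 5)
Your proposal is correct and follows essentially the same route as the paper: observe that $H\subseteq I_G(\theta)$ because $\theta$ is a class function on the normal subgroup $H$, then note that $t=[G:I_G(\theta)]$ divides $[G:H]=q$, which is prime, forcing $I_G(\theta)$ to be $H$ or $G$. Your extra remarks (the correspondence theorem phrasing and the explicit verification that $H$ fixes $\theta$) just flesh out details the paper leaves implicit.
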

\begin{proof}

Since $H$ is a normal subgroup of $G$,  $H\subset I_{G}(\theta)$. So if we put $t=[G:I_{G}(\theta)]$, then $t\mid[G:H]$. Therefore $t=1$ or $t=q$.
\end{proof}

\begin{thm}\label{repres}

Under the conditions of  Lemma \ref{con}, let $\chi$ be an irreducible character of $G$. Then, either
\begin{itemize}
\item [(i)] $Res_{H}^{G}\chi=\theta$, for some $\theta\in Irr(H)$ or
\item[(ii)] $\chi=Ind_{H}^{G}\theta$, for some $\theta\in Irr(H)$.
\end{itemize}
\end{thm}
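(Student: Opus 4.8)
The plan is to apply Clifford's theorem (Theorem \ref{clif}) together with the dichotomy for the inertia group established in Lemma \ref{con}, and then split into the two cases $I_G(\theta) = G$ and $I_G(\theta) = H$.

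First I would pick an irreducible constituent $\theta \in Irr(H)$ of $Res_H^G \chi$ — such a $\theta$ exists by complete reducibility of $Res_H^G\chi$ — and write $e = \langle Res_H^G\chi, \theta\rangle > 0$ and $t = [G:I_G(\theta)]$. By part (c) of Theorem \ref{clif} we have $Res_H^G\chi = e\sum_{j=1}^t \theta^{g_j}$ and the numerical constraint $e^2 t \le |G/H| = q$. Since $q$ is prime, the only possibilities for the pair $(e,t)$ are $(1,1)$, $(1,q)$; in particular $e=1$ always. Now invoke Lemma \ref{con}: either $I_G(\theta)=G$, which forces $t=1$, or $I_G(\theta)=H$, which forces $t=q$.

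In the first case, $t=1$ and $e=1$ give $Res_H^G\chi = \theta$, which is exactly alternative (i); here $\theta$ is invariant in $G$ and Theorem \ref{gaa} confirms consistency (it is extendible, and $\chi$ is one such extension). In the second case, $I_G(\theta)=H$, so by part (b) of Theorem \ref{clif} the induced character $Ind_H^G\theta$ is irreducible. It remains to identify $\chi$ with $Ind_H^G\theta$: since $\theta$ is a constituent of $Res_H^G\chi$, by Frobenius reciprocity $\langle \chi, Ind_H^G\theta\rangle = \langle Res_H^G\chi, \theta\rangle = e = 1 \ne 0$, and as both $\chi$ and $Ind_H^G\theta$ are irreducible this inner product being nonzero forces $\chi = Ind_H^G\theta$, which is alternative (ii).

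I do not expect a serious obstacle here; the result is essentially a bookkeeping consequence of Clifford theory specialized to prime index. The only point requiring a little care is the clean extraction of $e=1$ and the case split from the inequality $e^2t \le q$ combined with $t \mid q$, and the use of Frobenius reciprocity to upgrade "constituent" to "equality" in the induced case. One should also note that the two alternatives are not mutually exclusive a priori, but the statement only claims that at least one holds, so nothing further is needed.
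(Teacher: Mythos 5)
Your overall strategy matches the paper's: pick an irreducible constituent $\theta$ of $Res_H^G\chi$, invoke Lemma \ref{con} to split into $I_G(\theta)=G$ and $I_G(\theta)=H$, and in the second case use irreducibility of $Ind_H^G\theta$ plus Frobenius reciprocity to get $\chi=Ind_H^G\theta$. That second branch is complete and correct.

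There is, however, a genuine gap in the first branch. You claim that the inequality $e^2t\leq |G/H|=q$ together with $t\mid q$ forces $(e,t)\in\{(1,1),(1,q)\}$, ``in particular $e=1$ always.'' This is false when $t=1$ and $q\geq 5$: the inequality only gives $e^2\leq q$, which permits, say, $e=2$ for $q=5$. Since the whole point of case (i) is to show $Res_H^G\chi=\theta$ rather than $Res_H^G\chi=e\theta$ with $e>1$, you cannot get $e=1$ from the numerical inequality in Theorem \ref{clif}(c) alone. The paper closes this gap with a strictly stronger fact, the divisibility $e^2\mid [G:H]$ (Theorem 19.4 in \cite{Hup}), which together with primality of $q$ does force $e=1$. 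Alternatively, one can use Theorem \ref{gaa} substantively rather than as a ``consistency check'': since $\theta$ is invariant it extends to some $\chi_0\in Irr(G)$, Gallagher's theorem gives $Ind_H^G\theta=\sum_{\beta\in Irr(G/H)}\beta(1)\,\chi_0\beta$, and since $G/H\cong\Z/q\Z$ is abelian every $\beta(1)=1$, so every irreducible character lying over $\theta$ restricts to $\theta$. Either repair is needed; as written your deduction of $e=1$ in the invariant case does not go through.
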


\begin{proof}

Let $\chi\in Irr(G)$ and take $\theta\in Irr(H)$ an irreducible constituent of $Res_{H}^G \chi$. Then $Res_{H}^{G}\chi=e\sum_{i=1}^{t}\theta_i$, where $\theta_i$ are the conjugates of $\theta$ and  ${\langle Res_{H}^{G}\chi,\theta\rangle}=e>0$. Let us fix $\theta=\theta_1$.
If $t=1$, then $I_{G}(\theta)=G$ and by the Theorem \ref{gaa}, $\theta$ is extendible to $G$. On the other hand, we know that \cite[Theorem~19.4]{Hup}

\begin{equation*}
e^2\mid [G:H]=q,
\end{equation*}
so $e^2=1$. Therefore,  $e=1$ and $Res_{H}^{G}\chi=\theta$, so we have (i).

If $t=q$ then $I_{G}(\theta)=H$ and, from Theorem \ref{clif}, $Ind_{H}^{G}\theta\in Irr(G)$. By Frobenius reciprocity,

$$\langle Ind_{H}^{G}\theta,\chi\rangle_{G}=\langle\theta,Res_{H}^{G}\chi\rangle_{H}=\overline{\langle Res_{H}^{G}\chi,\theta\rangle}=\overline{e}=e.$$
Since $Ind_{H}^{G}\theta$ and $\chi$ are irreducible and $\theta$ is a consituent of $Res_{H}^{G}\chi$, it follows that $e=1$ and so $Ind_{H}^{G}\theta=\chi$.

\end{proof}

\section{Estimation for the root Artin conductor of irreducible characters of $G_n$}

 Let $L/M$ be a Galois extension and $\chi$ be the character of a linear representation of Gal($L/M$). The Artin conductor attached to $\chi$ is given by the ideal
\begin{equation*}
f_{\chi}=\prod_{\mathfrak{p}\nmid\infty}\mathfrak{p}^{f_{\chi}(\mathfrak{p})},
\end{equation*}
where $$f_{\chi}(\mathfrak{p})=\frac{1}{|G_0|}\sum_{j\geq 0}(|G_j|\chi(1)-\chi(G_j))$$ and $G_i$ is the $i$-th ramification group of the local extension $L_{\mathfrak{b}}/M_{\mathfrak{p}}$ with $\mathfrak{b}$ a prime over $\mathfrak{p}$ and $\chi(G_j)=\sum_{g\in G_j}\chi(g).$

It is well known that if $L$ is an unramified extension of $M$, then $f_{\chi}$ is the trivial ideal. Then, in order to find a family of irreducible representations with bounded root Artin conductor, let us consider a number field $K$  with infinite $p$-class field tower for some prime $p$. Let $K_n$ be the Hilbert $p$-class field of $K_{n-1}$ with $K_0=K$ and $G_n$=Gal($K_n/{\Q}$).

The main objective of this section is to prove that, under some conditions over $K$ and applying the results of the previous section, there exists an upper bound for the root Artin conductor of the irreducible characters of $G_n$. This bound  depends only on the base field $K$. In addition, we obtain that for each $n>1$ it is possible to find an irreducible character of $G_n$ with  degree increasing with $n$.

\begin{prop}\label{pgroup}
Let $K$ be a Galois extension of ${\Q}$  with infinite $p$-class field tower, for some prime $p$. Suppose that $K$ has a subfield  $\widetilde{k}$  satisfying the following conditions:
\begin{itemize}
\item[(a)] $\widetilde{k}$ is  Galois over ${\Q}$.
\item[(b)] $[\widetilde{k}:{\Q}]=q$, with $q$ a prime number.
\end{itemize}
Let $\chi\in Irr(G_n)$, where $G_n=Gal(K_n/{\Q})$. If $\widetilde{H}_n=Gal(K_n/\widetilde{k})$, then either
\begin{itemize}
\item [(i)] $Res_{\widetilde{H}_n}^{G_n}\chi=\theta$, for some $\theta\in Irr(\widetilde{H}_n)$, or
\item[(ii)] $\chi=Ind_{\widetilde{H}_n}^{G_n}\theta$, for some $\theta\in Irr(\widetilde{H}_n)$ .
\end{itemize}

\end{prop}

\begin{proof}
The proof follows directly from Theorem \ref{repres} with $G=G_n$ and $H=\widetilde{H}_n$.
\end{proof}

\begin{prop}\label{grande}
Let $K$ be a number field with infinite $p$-class field tower for some prime $p$. If $T_n$=Gal($K_n/K$), then
for each $n\geq 1$ there exists  $\phi\in Irr(T_n)$ such that $$\phi(1)> 2^{\frac{n-1}{2}}.$$
\end{prop}
\begin{proof}

Let us consider the following chain of subgroups. If $n$ is even, we take  for $1\leq j\leq \frac{n}{2}$:
\begin{eqnarray*}
H_0&=&\{1\},\\
H_1&=&Gal(K_n/K_{n-2}),\,\,\,\,\,H_1/H_0\cong H_1\\
H_2&=&Gal(K_n/K_{n-4}),\,\,\,\,\,H_2/H_1\cong Gal(K_{n-2}/K_{n-4})\\
&\vdots&\\
H_j&=&Gal(K_n/K_{n-2j}),\,\,\,\,\,H_j/H_{j-1}\cong Gal(K_{n-2(j-1)}/K_{n-2j})\\
&\vdots&\\
H_{\frac{n}{2}}&=&T_n=Gal(K_n/K),\,\,\,\,\,H_{\frac{n}{2}}/H_{\frac{n}{2}-1}\cong Gal(K_2/K).
\end{eqnarray*}
If $l<i-1$ then $K_i/K_l$ is a non abelian group, so by Corollary \ref{grandecito}, there exists $\phi\in Irr(T_n)$ with $\phi(1)\geq 2^{\frac{n}{2}}> 2^{\frac{n-1}{2}}.$

If $n$ is odd, for $j<\frac{n-1}{2}$ we take $H_j$  and  $H_j/H_{j-1}$ as in the  even case. For $j=(n-1)/2 $ we take $H_{\frac{n-1}{2}}$=$T_n$ and $H_{\frac{n-1}{2}}/H_{\frac{n-1}{2}-1}\cong Gal(K_3/K)$. Hence, there exists $\phi\in Irr(G)$ such that $\phi(1)> 2^{\frac{n-1}{2}}.$

\end{proof}

\begin{cor}\label{grande2}
Let $G_n$ be as in Proposition \ref{pgroup}. Then for each $n>1$, there exists $\chi\in Irr(G_n)$ such that $$\chi(1)> 2^{\frac{n-1}{2}}.$$
\end{cor}

\begin{proof}
Note that if $T_n=Gal(K_n/K)$ has an irreducible character $\theta$ with $\theta(1)> 2^{\frac{n-1}{2}}$, then there exists $\chi\in Irr(G)$ with $\chi(1)> 2^{\frac{n-1}{2}}$.  In fact, let $\theta\in Irr(T_n)$ with $\theta(1)> 2^{\frac{n-1}{2}}$ and choose $\chi\in Irr(G_n)$ such that $\theta$ is an irreducible constituent of $Res_{T_{n}}^{G_n} \chi$. By Theorem \ref{clif}, $\chi(1)=et\theta(1)$, where $e=\langle Res_{T_{n}}^{G_n} \chi,\theta\rangle$ and $t=[G_n:I_g(\theta)]$. As $e,t\geq 1$, then $\chi(1)\geq \theta(1)> 2^{\frac{n-1}{2}}$.

\end{proof}

Now, we  obtain upper bounds for the root Artin conductor of irreducible characters of $G_n$.

\begin{thm}\label{bound}
Assume $G_n$ as in Proposition \ref{pgroup} and $\chi\in Irr(G_n)$.
\begin{itemize}
\item[(i)] If  $Res_{\widetilde{H}_n}^{G_n}\chi=\theta$, for some $\theta\in Irr(\widetilde{H}_n)$ then
$$f_{\chi}^{1/\chi(1)}\leq |D_{\widetilde{k}/{\Q}}|N_{\widetilde{k}/{\Q}}(f_{\theta})^{1/{\theta(1)}}.$$
\item[(ii)] If $\chi=Ind_{\widetilde{H}_n}^{G_n}\theta$, for some $\theta\in Irr(\widetilde{H}_n)$ then
$$f_{\chi}^{1/\chi(1)}=|D_{\widetilde{k}/{\Q}}|^{1/q}N_{\widetilde{k}/{\Q}}(f_{\theta})^{1/{q\theta(1)}}.$$
\end{itemize}

\end{thm}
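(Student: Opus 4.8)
The plan is to exploit the standard inductive (tower) formula for the Artin conductor together with the conductor–discriminant behaviour under restriction and induction along the subextension $\widetilde{k}/\Q$. Write $G_n = Gal(K_n/\Q)$ and $\widetilde{H}_n = Gal(K_n/\widetilde{k})$, so that $[G_n:\widetilde{H}_n] = [\widetilde{k}:\Q] = q$. The key input is the conductor formula for a tower $\Q \subseteq \widetilde{k} \subseteq K_n$: for a character $\theta$ of $\widetilde{H}_n$, one has
\begin{equation*}
\mathfrak{f}_{Ind_{\widetilde{H}_n}^{G_n}\theta} \;=\; D_{\widetilde{k}/\Q}^{\,\theta(1)} \cdot N_{\widetilde{k}/\Q}\!\left(\mathfrak{f}_\theta\right),
\end{equation*}
where $D_{\widetilde{k}/\Q}$ is the (absolute value of the) discriminant of $\widetilde{k}$; this is the classical ``Führerdiskriminantenproduktformel'' / conductor-discriminant formula in its relative form (see e.g.\ Serre, \emph{Local Fields}, or Neukirch). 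Here $f_\chi$ is an ideal of $\Z$, hence identified with a positive integer, and $|D_{\widetilde{k}/\Q}|$ is likewise a positive integer.

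\medskip
\noindent\emph{Case (ii): $\chi = Ind_{\widetilde{H}_n}^{G_n}\theta$.} This is the direct case. Since $\chi(1) = [G_n:\widetilde{H}_n]\,\theta(1) = q\,\theta(1)$, applying the displayed induction formula gives
\begin{equation*}
f_\chi \;=\; |D_{\widetilde{k}/\Q}|^{\,\theta(1)} \cdot N_{\widetilde{k}/\Q}(f_\theta),
\end{equation*}
and therefore
\begin{equation*}
f_\chi^{1/\chi(1)} \;=\; |D_{\widetilde{k}/\Q}|^{\theta(1)/(q\theta(1))} \cdot N_{\widetilde{k}/\Q}(f_\theta)^{1/(q\theta(1))} \;=\; |D_{\widetilde{k}/\Q}|^{1/q}\,N_{\widetilde{k}/\Q}(f_\theta)^{1/(q\theta(1))},
\end{equation*}
which is exactly (ii), with equality.

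\medskip
\noindent\emph{Case (i): $Res_{\widetilde{H}_n}^{G_n}\chi = \theta$.} Here $\chi$ is an extension of $\theta$ to $G_n$, so $\chi(1) = \theta(1)$. The idea is to compare $f_\chi$ with $f_{Ind_{\widetilde{H}_n}^{G_n} Res_{\widetilde{H}_n}^{G_n}\chi} = f_{Ind_{\widetilde{H}_n}^{G_n}\theta}$. By the projection formula (Frobenius reciprocity at the level of characters), $Ind_{\widetilde{H}_n}^{G_n}\theta = Ind_{\widetilde{H}_n}^{G_n}(Res_{\widetilde{H}_n}^{G_n}\chi) = \chi \cdot Ind_{\widetilde{H}_n}^{G_n}\mathbf{1} = \chi \otimes (\text{regular rep.\ of }G_n/\widetilde{H}_n\text{ pulled back})$. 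Since $G_n/\widetilde{H}_n$ is cyclic of order $q$, this pullback decomposes as $\bigoplus_{\psi} \psi$ over the $q$ characters $\psi$ of $Gal(\widetilde{k}/\Q)$ inflated to $G_n$, so
\begin{equation*}
Ind_{\widetilde{H}_n}^{G_n}\theta \;=\; \sum_{\psi \in Irr(Gal(\widetilde{k}/\Q))} \chi\,\psi .
\end{equation*}
Additivity of the conductor then gives $f_{Ind_{\widetilde{H}_n}^{G_n}\theta} = \prod_\psi f_{\chi\psi}$, and the $\psi=\mathbf{1}$ term contributes exactly $f_\chi$. Hence $f_\chi \mid f_{Ind_{\widetilde{H}_n}^{G_n}\theta} = |D_{\widetilde{k}/\Q}|^{\theta(1)}\,N_{\widetilde{k}/\Q}(f_\theta)$, so
\begin{equation*}
f_\chi^{1/\chi(1)} \;\leq\; \left(|D_{\widetilde{k}/\Q}|^{\theta(1)}\,N_{\widetilde{k}/\Q}(f_\theta)\right)^{1/\theta(1)} \;=\; |D_{\widetilde{k}/\Q}|\,N_{\widetilde{k}/\Q}(f_\theta)^{1/\theta(1)},
\end{equation*}
which is slightly weaker than the claimed bound $|D_{\widetilde{k}/\Q}|^{1/q}N_{\widetilde{k}/\Q}(f_\theta)^{1/\theta(1)}$. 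To recover the sharper exponent $1/q$ on the discriminant one should instead use the tower formula directly on $\chi$ itself: decomposing $f_\chi(\mathfrak p)$ via ramification groups of $K_n/\Q$ relative to those of $\widetilde{k}/\Q$ shows that the contribution of primes ramifying in $\widetilde{k}/\Q$ to $f_\chi$ is bounded by the $\widetilde{k}/\Q$-part raised to the power $\chi(1)$, but \emph{divided} among the $q$ conjugate places lying above, giving the $|D_{\widetilde{k}/\Q}|^{\chi(1)/q}$ contribution; the remaining (tamely/wildly ramified in $K_n/\widetilde{k}$) part is $N_{\widetilde{k}/\Q}(f_\theta)$ as before.

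\medskip
\noindent\emph{Main obstacle.} The routine parts are the two applications of the conductor–discriminant formula. The delicate point is case (i): getting the exponent $1/q$ rather than $1$ on $|D_{\widetilde{k}/\Q}|$. This requires a genuine local analysis of how the ramification filtration of $K_n/\Q$ at a prime $\mathfrak p$ splits over the places of $\widetilde k$ above $\mathfrak p$, using that $\theta = Res\,\chi$ is $G_n$-invariant (so all its conjugates coincide) — equivalently, carefully tracking the local conductor formula $f_{\mathfrak p}(\chi) = \frac{1}{|G_0|}\sum_{j\geq 0}(|G_j|\chi(1) - \chi(G_j))$ through the subextension. I expect this local bookkeeping to be the only step that is not immediate.
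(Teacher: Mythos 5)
Your proof of case (ii) is exactly the paper's: apply the induction (conductor--discriminant) formula $f_{Ind_{\widetilde{H}_n}^{G_n}\theta}=|D_{\widetilde{k}/\Q}|^{\theta(1)}N_{\widetilde{k}/\Q}(f_\theta)$ and take the $\chi(1)=q\theta(1)$ root. Your main argument for case (i) is also exactly the paper's: write $Ind_{\widetilde{H}_n}^{G_n}\theta=\sum_{i=1}^q\chi\psi_i$ with $\psi_i$ the characters of $G_n/\widetilde{H}_n\cong\Z/q\Z$, use additivity of conductors and divide, obtaining $f_\chi^{1/\chi(1)}\le |D_{\widetilde{k}/\Q}|\,N_{\widetilde{k}/\Q}(f_\theta)^{1/\theta(1)}$. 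The discrepancy you flag is real, but it is not a gap in your argument: the paper's own proof of (i) ends with precisely this exponent-$1$ bound, and the corollary that follows in the paper also only invokes $|D_{\widetilde{k}/\Q}|^{1}$. The exponent $1/q$ in the displayed statement of (i) is therefore an error in the statement, not something you failed to prove.

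You should drop the sketched ``local bookkeeping'' repair, because the $1/q$ bound in (i) is actually false. Take $\chi$ to be the inflation to $G_n$ of a nontrivial character $\psi$ of $Gal(\widetilde{k}/\Q)\cong\Z/q\Z$. Then $Res_{\widetilde{H}_n}^{G_n}\chi=\mathbf{1}=\theta$, so $\theta(1)=1$ and $N_{\widetilde{k}/\Q}(f_\theta)=1$, and (i) would assert $f_\chi\le |D_{\widetilde{k}/\Q}|^{1/q}$. But every nontrivial character of a cyclic group of prime order $q$ is faithful, so all $q-1$ of them have the same conductor $f$, and the conductor--discriminant formula gives $|D_{\widetilde{k}/\Q}|=f^{\,q-1}$, i.e.\ $f_\chi=f=|D_{\widetilde{k}/\Q}|^{1/(q-1)}>|D_{\widetilde{k}/\Q}|^{1/q}$ whenever $\widetilde{k}\ne\Q$. (Concretely, for $\widetilde{k}=\Q(\zeta_{11}+\zeta_{11}^{-1})$ one gets $f_\chi=11$ versus the claimed bound $11^{4/5}$.) So the correct statement of (i) is the one you and the paper both prove, with $|D_{\widetilde{k}/\Q}|$ in place of $|D_{\widetilde{k}/\Q}|^{1/q}$; no further local analysis can recover the sharper exponent.
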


\begin{proof}
In the first case, we have $\chi(1)=\theta(1)$ and
$$Ind_{\widetilde{H}_n}^{G_n}\theta=\sum_{i=1}^q\psi_i(1)\cdot\chi\psi_i,$$
where $Irr(G_n/\widetilde{H}_n)=\{\psi_1,\psi_2,\ldots,\psi_q\}$ (see  \cite[Theorem~19.5]{Hup}).
 Since $G_n/\widetilde{H}_n$ is isomorphic to the abelian group ${\Z}/q{\Z}$, it follows that $Ind_{\widetilde{H}_n}^{G_n}\theta=\sum_{i=1}^q\chi\psi_i$.
The Artin conductor of this induced character is, on the one hand,
$$f_{Ind_{\widetilde{H}_n}^{G_n}\theta}=|D_{\widetilde{k}/{\Q}}|^{\theta(1)}N_{\widetilde{k}/{\Q}}(f_{\theta}),$$
where the ideal $f_{\theta}$ is the Artin conductor of $\theta$.
On the other hand, assuming that $\psi_1$ is the trivial character,
$$f_{Ind_{\widetilde{H}_n}^{G_n}\theta}=f_{\sum_{i=1}^q\chi\psi_i}=f_{\chi}\cdot\prod_{i=2}^qf_{\chi\psi_i}.$$

\noindent Now, combining these expressions we get
$$f_{\chi}= |D_{\widetilde{k}/{\Q}}|^{\theta(1)}N_{\widetilde{k}/{\Q}}(f_{\theta})\cdot\left(\prod_{i=2}^qf_{\chi\psi_i}\right)^{-1},$$
so
\begin{equation*}
f_{\chi}^{1/\chi(1)}\leq |D_{\widetilde{k}/{\Q}}|N_{\widetilde{k}/{\Q}}(f_{\theta})^{1/\theta(1)}.
\end{equation*}

In the second case,
$$\chi(1)=[G_n:\widetilde{H}_n]\theta(1)=q\theta(1)$$
and we can see that the root Artin conductor of $\chi$ is given by the expression

\begin{equation*}
f_{\chi}^{1/\chi(1)}=|D_{\widetilde{k}/{\Q}}|^{1/q}N_{\widetilde{k}/{\Q}}(f_{\theta})^{1/q\theta(1)}.
\end{equation*}

\end{proof}

\noindent In order to obtain a  bound for the root Artin conductors, we need the following result.

\begin{lem}\label{ramifi}
  Assume $K_n$ and $K$ as in the Proposition \ref{pgroup}. Let $\mathfrak{p}$ be a prime in $\widetilde{k}$, with $\mathfrak{b}$ and $\mathfrak{q}$ primes over $\mathfrak{p}$ in $K_n$ and $K$ respectively. Let $G_i(K_{n,\mathfrak{b}}/\widetilde{k}_{\mathfrak{p}})$  and $G_i(K_{\mathfrak{q}}/\widetilde{k_{\mathfrak{p}}})$ be the $i$-th ramification groups of the local extensions $K_{n,\mathfrak{b}}/\widetilde{k}_{\mathfrak{p}}$ and $K_{\mathfrak{q}}/\widetilde{k_{\mathfrak{p}}}$. Then, for $i\geq 0$

\begin{itemize}

\item[(a)] $G_i(K_{n,\mathfrak{b}}/K_{\mathfrak{q}})=G_i(K_{n,\mathfrak{b}}/\widetilde{k_{\mathfrak{p}}})\cap G(K_{n,\mathfrak{b}}/K_{\mathfrak{q}})=\{1\}$
\item[(b)] $|G_i(K_{n,\mathfrak{b}}/\widetilde{k_{\mathfrak{p}}})|=|G_i(K_{\mathfrak{q}}/\widetilde{k_{\mathfrak{p}}})|.$
\end{itemize}
\end{lem}

\noindent The proof of this lemma follows directly from properties of higher ramification groups (see for example \cite[p.177-180]{Ne}) and by the fact that $K_n/K$ is an unramified extension.

\begin{cor}
There is an infinite sequence $\{\chi_n\}_{n\in{\N}}$ of irreducible Artin characters with $\chi_n(1)\to\infty$ and with
$$f_{\chi_n}^{1/{\chi_n(1)}}\leq C,$$
where  $C>0$ is an effective computable constant.
\end{cor}

\begin{proof}
By the  Corollary \ref{grande2} and Theorem \ref{bound}, we know that for each $n$ there is an irreducible character $\chi_n$ of $G_n$ with $\chi_n(1)\to\infty$ and  $$f_{\chi_n}^{1/\chi_n(1)}\leq |D_{\widetilde{k}/{\Q}}|N_{\widetilde{k}/{\Q}}(f_{\theta})^{1/\theta(1)},$$
for some $\theta\in Irr(\widetilde{H}_n)$.
By the properties of the higher ramification groups stated in Lemma \ref{ramifi} and considering that the primes ramifying in $K$ are the only ones  that appears in $N_{\widetilde{k}/{\Q}}(f_{\theta})$, it is possible to find a constant $T>0$ depending only on the base field $K$, such that $N_{\widetilde{k}/{\Q}}(f_{\theta})\leq T^{\theta(1)}$.
Hence,
$$f_{\chi_n}^{1/\chi_n(1)}\leq |D_{\widetilde{k}/{\Q}}|T:=C.$$
\end{proof}

\begin{remark}
As the referee pointed out, it is possible to avoid the hypothesis about the degree of $\widetilde{k}/{\Q}$ and obtain the same type of bounds for the asymptotic behavior of $f_{\chi}^{1/\chi(1)}$. This is accomplished in Theorem \ref{refe} below.

\begin{thm}\label{refe}
Let $K$ be a Galois extension of ${\Q}$ with infinite $p$-class field tower. Let $m=[K:{\Q}]$. Then there exists an infinite sequence
$\{\chi_n \}_{n\in{\N}}$ of irreducible Artin characters such that $\chi_n(1)\to\infty$ and
$$f_{\chi_n}^{1/\chi_n(1)}\leq |D_{K/{\Q}}|^{m^{1/2}}.$$
\end{thm}

\begin{proof}
Let $G_n=Gal(K_n/{\Q})$ and $T_n=Gal(K_n/K)$.  We can choose $\theta_n\in Irr(T_n)$ as in Proposition \ref{grande}. Then, by the Proposition \ref{new},  there exists  $\chi_n\in Irr(G_n)$  such that  $\langle Ind_{H_n}^{G_n}\theta_n,\chi_n\rangle =a\geq 1$ and with $\chi_n(1)>m^{-1/2}\theta_n(1)$, so

\begin{equation*}\label{dess}
\chi_n(1)>m^{-1/2}2^{\frac{n-1}{2}}.
\end{equation*}

\noindent Hence, by the properties of the Artin conductor we get
$$f_{\chi_n}^a\leq f_{Ind_{H_n}^{G_n}\theta_n}=|D_{K/{\Q}}|^{\theta_n(1)},$$
and therefore,
$$f_{\chi_n}^{1/\chi_n(1)}\leq f_{\chi_n}^{a/\chi_n(1)}\leq |D_{K/{\Q}}|^{m^{1/2}}.$$

\end{proof}
\end{remark}

\section{Number fields with infinite $2$-class field tower}

Golod and Shafarevich \cite{GS} proved that a number field $K$ has an infinite $p$-class field tower if  the $p$-rank of the class group of $K$ is large enough. In this case,
$$\alpha(r_1,r_2)\leq |D_K|^{1/[K:{\Q}]},$$
where $D_K$ is the discriminant of $K$.

In addition, Martinet has constructed a number field with infinite Hilbert class field towers and lowest known root discriminant and  proved that
\begin{equation*}
\alpha(0,1) < 93\,\,\textrm{and}\,\,\alpha(1,0) < 1059.
\end{equation*}
In particular, he found that $K=\Q(\zeta_{11}+\zeta_{11}^{-1},\sqrt{2},\sqrt{-23})$ has infinite 2-class field tower.
Since $\widetilde{k}=\Q(\zeta_{11}+\zeta_{11}^{-1})$ is a subfield of $K$ of degree 5 over $\Q$,  $K$ satisfies the conditions of the Theorem \ref{bound}.
The discriminant of $\widetilde{k}$ is
$$|D_{\widetilde{k}/{\Q}}|=14641=11^4$$
and the only rational primes that ramify in $K$ are 2, 11 and 23.
Using PARI/GP \cite{PARI2}, we can estimates the sizes of the higher ramification groups. Thus, we get the upper bound
\begin{equation*}
N_{\widetilde{k}/{\Q}}(f_{\theta})\leq (2^{15}23)^{\theta(1)}.
\end{equation*}
With this estimation,  it  follows the explicit result:

\begin{cor}\label{gene}
For each $n\geq 1$, there exists a irreducible character $\chi_n$ such that $\chi_n(1)\to\infty$  and
$$f_{\chi_n}^{1/\chi_n(1)}\leq C,\,\,\textrm{where}\,\,C \leq 11^4\cdot 2^{15}\cdot 23.$$
\end{cor}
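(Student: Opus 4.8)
The plan is to feed the specific field $K=\Q(\zeta_{11}+\zeta_{11}^{-1},\sqrt 2,\sqrt{-23})$ into the apparatus of \S2--\S3 and make every constant there explicit. First I would verify the hypotheses of Theorem \ref{pgroup}: by Martinet's construction $K$ is Galois over $\Q$ and has an infinite $2$-class field tower, so the tower $K_0=K\subseteq K_1\subseteq\cdots$, the groups $G_n=\mathrm{Gal}(K_n/\Q)$, and Proposition \ref{grande} are all available; and $\widetilde k=\Q(\zeta_{11}+\zeta_{11}^{-1})$ is a subfield of $K$ which is abelian over $\Q$ of degree $(11-1)/2=5$, a prime, so we may take $q=5$ and $\widetilde H_n=\mathrm{Gal}(K_n/\widetilde k)$. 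Thus the Corollary following Theorem \ref{bound} applies and supplies, for each $n\ge1$, a character $\chi\in Irr(G_n)$ with $\chi(1)>2^{(n-1)/2}$ and
\[
f_{\chi}^{1/\chi(1)}\ \le\ |D_{\widetilde k/\Q}|\cdot N_{\widetilde k/\Q}(f_{\theta})^{1/\theta(1)}
\]
for a suitable constituent $\theta\in Irr(\widetilde H_n)$ of $Res^{G_n}_{\widetilde H_n}\chi$. Since $2^{(n-1)/2}\to\infty$, producing the character is automatic and everything reduces to bounding the right-hand side by a constant depending only on $\widetilde k$.

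The discriminant factor is an immediate conductor--discriminant computation: $\mathrm{Gal}(\widetilde k/\Q)\cong\Z/5\Z$ has characters of conductor $1$ (the trivial one) and $11$ (the four nontrivial ones), so $|D_{\widetilde k/\Q}|=11^{4}=14641$. It then remains to establish the uniform bound $N_{\widetilde k/\Q}(f_{\theta})\le(2^{15}\cdot 23)^{\theta(1)}$ for every $n$ and every $\theta\in Irr(\widetilde H_n)$, and this is the step I expect to be the main obstacle. The key structural input is that $K_n/K$ is unramified, so a prime ramifying in $K_n/\widetilde k$ lies over $2$ or over $23$ only --- the prime $11$ is unramified in $K_n/\widetilde k$ (its ramification in $K_n/\Q$ is entirely absorbed by $\widetilde k/\Q$) and so contributes nothing to $f_\theta$. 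One then bounds the local exponent $f_{\mathfrak p}(\theta)=\tfrac{1}{|G_0|}\sum_{j\ge0}\bigl(|G_j|\theta(1)-\theta(G_j)\bigr)$ place by place: above $23$ the extension $K/\widetilde k$ is tamely ramified of degree $2$ (only the $\sqrt{-23}$ part ramifies, since $2$ is a square modulo $23$), so $G_1=1$ and the contribution is linear in $\theta(1)$; above $2$ the ramification is wild of degree $2$, and one computes the finitely many, $n$-independent, higher ramification groups $G_j$ of the relevant completion of $\widetilde k$ using PARI/GP and substitutes them into the formula, using only the trivial bounds $0\le\theta(G_j)\le|G_j|\theta(1)$. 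Assembling these local estimates gives $N_{\widetilde k/\Q}(f_{\theta})^{1/\theta(1)}\le 2^{15}\cdot 23$.

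Putting the three inputs together, the character $\chi$ from the first step satisfies $\chi(1)>2^{(n-1)/2}\to\infty$ and
\[
f_{\chi}^{1/\chi(1)}\ \le\ |D_{\widetilde k/\Q}|\cdot N_{\widetilde k/\Q}(f_{\theta})^{1/\theta(1)}\ \le\ 11^{4}\cdot 2^{15}\cdot 23,
\]
which is exactly the claim with $C=11^{4}\cdot2^{15}\cdot23$. The one point requiring care is uniformity in $n$: the constituent $\theta$ changes with $n$, so the ramification estimate must be made to depend only on the fixed completions of $\widetilde k$ at $2$ and $23$ and to scale linearly in $\theta(1)$ --- which is precisely how the bound above is organized.
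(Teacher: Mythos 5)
Your proposal is correct and follows essentially the same route as the paper: verify the hypotheses of Theorem \ref{pgroup} for Martinet's field with $\widetilde k=\Q(\zeta_{11}+\zeta_{11}^{-1})$ and $q=5$, compute $|D_{\widetilde k/\Q}|=11^4$, establish $N_{\widetilde k/\Q}(f_\theta)\le(2^{15}\cdot 23)^{\theta(1)}$ via the ramification groups at the primes above $2$ and $23$ (the paper delegates this to a PARI/GP computation), and combine with Proposition \ref{grande} and the corollary to Theorem \ref{bound}. Your ramification analysis --- that $K_n/K$ is unramified, that $11$ contributes nothing to $f_\theta$, and that the local exponents scale linearly in $\theta(1)$ uniformly in $n$ --- is in fact spelled out in more detail than in the paper itself.
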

An open problem now is to improve the constant $C$.

\vspace{0.3cm}

\noindent \textbf{Acknowledgements}. This research was partially supported by the ``Red Iberoamericana de Teor\'ia de N\'umeros'', ECOS-CONICYT grant 170022 ``Explicit Arithmetic Geometry'' and IdEx-Universit\'e de Bordeaux (credits to Benjamin Matschke). The author would like to thank Eduardo Friedman for suggesting this problem and  his helpful advice.
The author also thanks Bill Allombert, Yuri Bilu, Mariela Carvacho, Milton Espinoza and Andrea Vera for useful discussions. Finally, the author thanks the referees for the comments and in particular for suggesting  Proposition 3 and Theorem 14. While working  on this project, the author visited the ICMAT Madrid and IMB Bordeaux.

\bibliographystyle{amsplain}
\bibliography{refer}
\end{document}